\newtheorem{thm}{Theorem}[section]
\newtheorem{lem}[thm]{Lemma}
\theoremstyle{remark}
 \def\eb{{\mathbf e}}
 \def\xb{{\mathbf x}}
 \def\yb{{\mathbf y}}
 \def\zb{{\mathbf z}}
 \def\Kb{{\mathbf K}}
 \def\CG{{\mathcal G}}
 \def\PP{{\mathbb P}}
 \def\RR{{\mathbb R}}
 \def\SS{{\mathbb S}}
 \def\alb{\mathbf{\alpha}}
 \def\beb{\mathbf{\beta}}
\newcommand{\abs}[1]{\left|#1\right|}
\newcommand{\inti}{\int_{-1}^1}
\newcommand{\intbd}{\int_{B^d}}
\newcommand{\intim}{\int_{I^m}}
\newcommand{\intsd}{\int_{S^{d-1}}}
\newcommand{\jacobi}[2]{P_{#1}^{(\alpha, \beta)}(#2)}
\newcommand{\onjacobi}[2]{p_{#1}^{(\alpha, \beta)}(#2)}
\newcommand{\onjacobimore}[4]{p_{#3}^{(#1,#2)}(#4)}
\begin{document}
\title[]
{Ces\`aro Summability of Fourier Orthogonal Expansions on the Cylinder} 
 
\author{Jeremy Wade}
\address{Department of Mathematics\\ Pittsburg State University\\
    Pittsburg, KS 66762.}
\email{jwade@pittstate.edu}

\date{\today}
\keywords{Ces\`aro Summability, Cylinder, Multidimensional Approximation}
\subjclass[2000]{41A35, 41A63, 42A24}
\thanks{}

\begin{abstract}
	A result concerning the Ces\`aro summability of the Fourier orthogonal expansion of a function on the cylinder, where the orthogonal basis consists of orthogonal polynomials, in the $L^p$ norms is presented. An upper bound for critical index $\delta$ is obtained.
\end{abstract}

\maketitle

\section{Introduction}
\setcounter{equation}{0}

Summability methods of orthogonal expansions of functions is a classical topic which still receives a considerable amount of attention.  In particular, research into the Ces\`aro summability of Fourier expansions in a multivariable setting is quite active.  Recent studies include investigations in Ces\'aro and Abel summability on the hexagon in \cite{Xu2}, where it was shown that the $(C,1)$ means of the orthogonal expansion converge uniformly.  Work remains in this direction, as the author conjectures that the convergence holds for $(C,\delta)$ means, as long as $\delta>0$.  

The Ces\`aro summability of a Fourier orthogonal expansion in orthogonal polynomials on the parabolic biangle is studied in \cite{CastellFilbirXu}, where the authors establish an upper bound for $\delta$ to ensure the convergence of $(C,\delta)$ means of orthogonal expansions on the parabolic biangle.  The unit ball and sphere are studied in \cite{DaiXu}, where sharp estimates on the values of $\delta$ to ensure the convergence of the $(C,\delta)$ means are given for expansions in $h$-harmonic polynomials on the unit sphere.  These results are then extended to analagous results on the unit ball and simplex.

Investigations into approximation on the cylinder have also received some attention recently.  In \cite{Marchi}, interpolation and cubature on the cylinder $B^2 \times [-1,1]$ is studied, where the authors investigate the Lebesgue constants using approximate Fekete points discrete Leja points, and weakly admissible meshes.  In \cite{Wade}, an approximation technique on the cylinder $B^2 \times [-1,1]$ relating orthogonal polynomials and the Radon transform on parallel disks was investigated, and a sharp estimate on the Lebesgue constant of the operator was obtained.

There are well-known results on the Ces\`aro summability of expansions on the cube \cite{LiXu} and the ball \cite{Xu}. In this paper, we prove an upper bound on the value of $\delta$ to ensure the convergence of the Ces\`aro means of the Fourier orthogonal expansion in orthogonal polynomials of a function defined on the cylinder $B^d \times [-1, 1]^m$.  For our orthogonal basis, we will be using the product of the orthogonal polynomials on $B^d$ with respect to the weight function $w_\mu(\yb)=(1-\|\yb\|^2)^{\mu-1/2}$, and the product Jacobi polynomials, with multi-index paramaters $\alpha$ and $\beta$, with respect to the product weight function $w^{(\alpha,\beta)}(\xb)=\prod_{i=1}^m (1-x_i)^{\alpha_i}(1+x_i)^{\beta_i}$.  Hence our orthogonal basis is orthogonal to the weight function $w(\alpha,\beta,\mu; \xb, \yb)=w^{(\alpha,\beta)}(\xb)w_\mu(\yb)$. While the orthogonal basis has a product structure, this structure does not translate into a product structure of the Ces\`aro means, so that the aforementioned results on the cube and ball do not provide a trivial result for the cylinder.  Our proof of the result requires reducing to problem of convergence on the cylinder to a problem of convergence on the cube $[-1,1]^{m+1}$, and then using the results proven by Xu and Li in \cite{LiXu} on the Ces\`aro summability of functions on the hypercube.  

\section{Background}

\setcounter{equation}{0}
We denote the $d$-dimensional unit ball by $B^d$ and the $m$-dimensional hypercube $[-1,1]^m$ by $I^m$.  We first present the orthogonal polynomial basis used to obtain the Fourier orthogonal decomposition on the cylinder. We will use the notation $n_d$ to denote the dimension of the space of orthogonal polynomials of degree $n$ on either $B^d$ or $I^d$, which is known to satisfy
\begin{displaymath}
	n_d=\binom{n+d-1}{n}.
\end{displaymath}

On the hypercube $I^m$, the product Jacobi polynomials are used to form a basis.  The univariate Jacobi polynomials $\jacobi{n}{x}$ form an orthogonal polynomial basis on $[-1,1]$ with respect to the weight function $w^{(\alpha,\beta)}(x)=(1-x)^\alpha(1+x)^\beta$, with $\alpha,\beta>-1$; that is,
\begin{equation}
	\label{eqn:jacobi}
	\int_{-1}^1 \, \jacobi{n}{x} \jacobi{m}{x}\, w^{(\alpha,\beta)}(x)\, dx = \delta_{m,n}c_{n,\alpha,\beta},
\end{equation}
where 
\begin{displaymath}
	c_{n,\alpha,\beta}= \frac{2^{\alpha+\beta+1}}{2n+\alpha+\beta+1}\frac{\Gamma(n+\alpha+1)\Gamma(n+\beta+1)}{\Gamma(n+1)\Gamma(n+\alpha+\beta+1)},
\end{displaymath}
see \cite{Szego} for more details.  The orthonormal Jacobi polynomials are obtained by scaling the Jacobi polynomials so that the right side of \eqref{eqn:jacobi} is $\delta_{m,n}$; we denote the resulting orthonormal polynomials by $\onjacobi{n}{x}$.  Next, an orthonormal product Jacobi polynomial on $I^m$ is defined by
\begin{equation}
	\label{eqn:productjacobi}
	P_\gamma^{(\alb,\beb)}(\xb) = \onjacobimore{\alpha_1}{\beta_1}{\gamma_1}{x_1}\onjacobimore{\alpha_2}{\beta_2}{\gamma_2}{x_2}\cdots\onjacobimore{\alpha_m}{\beta_m}{\gamma_m}{x_m}\
\end{equation}
where we let $\alpha=(\alpha_1, \alpha_2, \ldots, \alpha_m)$, and similarly define $\beta$, $\gamma$, and $\xb$.  We will consider the degree of $P_\gamma^{(\alb, \beb)}(\xb)$ to be the total degree, $|\gamma|:=\gamma_1+\gamma_2+\cdots+\gamma_m$.  With the weight function $w^{(\alpha,\beta)}(\xb)=\prod_{i=1}^m w^{(\alpha_i,\beta_i)}(x_i)$, an orthonormal basis for the space of polynomials of degree $n$ on $I^m$ is obtained by taking the set of all polynomials of the form in \eqref{eqn:productjacobi}, with $|\gamma|=n$, where orthonormality is in the sense that
\begin{equation}
	\label{eqn:cubeon}
	\int_{I^m}\,P^{(\alpha,\beta)}_\gamma(\xb) P^{(\alpha,\beta)}_\eta(\xb)w^{(\alpha,\beta)}(\xb)\, d\xb =\delta_{\gamma,\eta}.
\end{equation}
It will be convenient to follow the notation developed in Chapter 2 of \cite{DunklXu} and list the orthonormal basis of degree $n$ polynomials on $I^m$ in column vector form.  We define
\begin{displaymath}
	\PP_n^{(\alb,\beb)}(\xb) = \left[ \begin{array} {c}
						\jacobi{\gamma^1}{\xb}\\
						\\
						\jacobi{\gamma^2}{\xb}\\
						\vdots\\
						\\
						\jacobi{\gamma^{n_m}}{\xb}\\
					\end{array}\right],
\end{displaymath}
where $\gamma^1, \gamma^2, \ldots, \gamma^{n_m}$ are multi-indices with $|\gamma|=n$.

The following product formula for Jacobi polynomials, which appears in \cite[p. 262]{Gasper}, will be important in the proof of our result.
\begin{thm}\label{thm:Gasper}
Let $\alpha, \beta > -1$ and $\alpha \geq \beta$.  An integral representation of the form
\begin{displaymath}
	\jacobi{n}{x} \jacobi{n}{y} =
	\int_{-1}^{1}\, \jacobi{n}{1} \jacobi{n}{z} 
	\, K^{(\alpha,\beta)}(x,y,z) \, w^{(\alpha,\beta)}(z)\,dz
\end{displaymath}
exists, with the function $K$ satisfying
\begin{displaymath}
	\int_{-1}^1 \, \left| K^{(\alpha,\beta)}(x,y,z) \right| 
	\, w^{(\alpha,\beta)}(z) \, dz 
	\leq M
\end{displaymath}
for $-1<x,y<1$. 
\end{thm}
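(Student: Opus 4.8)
The plan is to deduce this from the product formula for Jacobi polynomials of Koornwinder, with the parameter range extended as in Gasper's work. First I would normalize: dividing the asserted identity by $\bigl(P_n^{(\alpha,\beta)}(1)\bigr)^2$ shows that it is equivalent to the statement for the normalized polynomials $R_n(x):=P_n^{(\alpha,\beta)}(x)/P_n^{(\alpha,\beta)}(1)$, namely
\begin{displaymath}
	R_n(x)R_n(y) = \int_{-1}^1 R_n(z)\,K^{(\alpha,\beta)}(x,y,z)\,w^{(\alpha,\beta)}(z)\,dz ,
\end{displaymath}
and that the weighted $L^1$ bound on $K$ is unchanged by this rescaling. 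So it suffices to exhibit a kernel for which this identity holds (with $K^{(\alpha,\beta)}$ independent of $n$) and then bound $\int_{-1}^1\bigl|K^{(\alpha,\beta)}(x,y,z)\bigr|\,w^{(\alpha,\beta)}(z)\,dz$ uniformly in $x,y\in(-1,1)$.

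For the construction of the kernel I would follow Koornwinder: for $\alpha>\beta$ introduce auxiliary variables $r\in[0,1]$ and $\psi\in[0,\pi]$ and set
\begin{displaymath}
	z(x,y,r,\psi) = \tfrac12(1+x)(1+y) + \tfrac12(1-x)(1-y)r^2 + r\sqrt{(1-x^2)(1-y^2)}\,\cos\psi - 1 ,
\end{displaymath}
which takes values in a subinterval of $[-1,1]$ (indeed $1+z=\bigl|\sqrt{(1+x)(1+y)/2}+r e^{i\psi}\sqrt{(1-x)(1-y)/2}\bigr|^2$), with $z(1,1,r,\psi)\equiv 1$ and $z(1,-1,r,\psi)\equiv -1$; one then verifies
\begin{displaymath}
	R_n(x)R_n(y) = c_{\alpha,\beta}\int_0^1\!\!\int_0^\pi R_n\bigl(z(x,y,r,\psi)\bigr)(1-r^2)^{\alpha-\beta-1}r^{2\beta+1}(\sin\psi)^{2\beta}\,d\psi\,dr ,
\end{displaymath}
with $c_{\alpha,\beta}$ fixed by the $n=0$ case. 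Rewriting the right-hand side as an integral over $z$ exhibits $K^{(\alpha,\beta)}(x,y,z)w^{(\alpha,\beta)}(z)$ as the push-forward of this explicit measure (which does not involve $n$); the degenerate cases $x=\pm1$, $y=\pm1$, or $\alpha=\beta$ are recovered by passing to the limit, producing boundary, lower-dimensional measures. To prove the displayed identity I would either use the interpretation of $z(x,y,r,\psi)$ as a geodesic-distance parameter on a product of spheres when $\alpha=\beta+k$ and $2\beta$ is a nonnegative integer, together with analytic continuation in $(\alpha,\beta)$ by Carlson's theorem applied to the resulting ${}_2F_1$ expressions, or verify it directly by expanding both sides in $\{R_m\}_{m\ge0}$ and matching coefficients with the linearization formula for Jacobi polynomials.

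It remains to bound the weighted $L^1$ norm of $K$. When $\beta\ge-1/2$ the measure above is nonnegative, so $\int_{-1}^1\bigl|K^{(\alpha,\beta)}\bigr|w^{(\alpha,\beta)}\,dz = \int_{-1}^1 K^{(\alpha,\beta)}w^{(\alpha,\beta)}\,dz = R_0(x)R_0(y)=1$ and $M=1$ works. For $-1<\beta<-1/2$ the factor $(\sin\psi)^{2\beta}$ is no longer integrable, so the $\psi$-integral must be regularized (reducing, via contiguous relations for Jacobi polynomials and fractional integration in $\psi$, to a parameter with $\beta\ge-1/2$ modified by a fixed bounded multiplier); the resulting kernel $K^{(\alpha,\beta)}$ genuinely changes sign, and one must estimate its weighted $L^1$ norm by hand from the explicit formula. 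This last step is the main obstacle: once the clean ``total mass equals one'' argument is unavailable, the cancellation in $K^{(\alpha,\beta)}(x,y,z)$ near the endpoints of the $z$-support has to be controlled uniformly in $x$ and $y$, and this is where the bulk of the work lies. Since only the finiteness of $M=M(\alpha,\beta)$ is needed for the intended application, no sharp value of the constant is required.
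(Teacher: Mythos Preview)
The paper does not prove this theorem at all: it is simply quoted from Gasper's 1972 \emph{Annals} paper (reference~\cite{Gasper}, p.~262) as a known result, and no argument is given. So there is no ``paper's own proof'' to compare against; you are supplying a proof where the paper supplies only a citation.

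That said, your outline is essentially the Koornwinder--Gasper argument and is correct in its broad strokes. The reduction to the normalized polynomials $R_n$ is fine, and the Koornwinder integral formula you write is the standard one, valid for $\alpha>\beta>-\tfrac12$; pushing the $(r,\psi)$-measure forward to the $z$-variable does produce the desired kernel, and for $\beta\ge-\tfrac12$ positivity of the measure gives $M=1$ immediately, as you say. The degenerate cases $\alpha=\beta$ and $x,y\in\{\pm1\}$ are indeed limits.

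Where your proposal is incomplete is exactly where you flag it: the range $-1<\beta<-\tfrac12$. You correctly note that the Koornwinder density is no longer integrable and that the kernel genuinely changes sign, and you are honest that ``this is where the bulk of the work lies'' --- but you do not actually carry it out. In Gasper's paper this case is handled not by regularizing the $\psi$-integral as you suggest, but by deriving an explicit closed form for $K^{(\alpha,\beta)}(x,y,z)$ as a ${}_2F_1$ and then estimating its weighted $L^1$ norm directly using asymptotics of the hypergeometric function near the support endpoints. Your ``contiguous relations plus fractional integration'' idea is plausible in spirit but is not a substitute for that estimate; as written, the proposal is a correct sketch for $\beta\ge-\tfrac12$ and an unfinished program for $-1<\beta<-\tfrac12$.
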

This result is for univariate Jacobi polynomials, but easily extends to the product Jacobi polynomials as
\begin{displaymath}
	P_n^{(\alpha,\beta)}(\xb) P_n^{(\alpha,\beta)}(\yb) = \intim P_n^{(\alpha,\beta)}(\eb) P_n^{(\alpha,\beta)}(\zb) \Kb^{(\alpha,\beta)}(\xb,\yb,\zb) w^{(\alpha,\beta)}(\zb) d\zb,	
\end{displaymath}
where $\xb,\yb \in I^m$, and
\begin{displaymath}
	\Kb^{(\alpha,\beta)}(\xb,\yb,\zb) = \prod_{i=1}^m K^{(\alpha_i, \beta_i)}(x_i,y_i,z_i),
\end{displaymath}
and $\Kb^{(\alpha,\beta)}(\cdot,\cdot,\cdot)$ satisfies
\begin{displaymath}
	\intim |\Kb^{(\alpha, \beta)}(\xb,\yb,\zb)|w^{(\alpha,\beta)}(\zb) d\zb \leq M,
\end{displaymath}
where $M$ is a constant given by the product of the constants in the univariate case.  

On the $d$-dimensional unit ball $B^d$, we consider the weight function $w_\mu(\yb)=(1-\|\yb\|^2)^{\mu-1/2}$, with $\mu\geq 0$, and denote a basis of orthonormal polynomials of degree $n=|\alpha|$ on $B^d$ by $s_{\alpha^1}^\mu(\yb),\ s_{\alpha^2}^\mu(\yb),\ldots,s^\mu_{\alpha^{n_d}}(\yb)$, where orthonormailty in a similar manner as \eqref{eqn:cubeon}.  Several examples of specific orthonormal bases exist, see \cite[p. 38]{DunklXu}.  If $d=1$ and $\mu=0$, these polynomials correspond to the Chebyshev polynomials of the first kind, while if $d=2$ and $\mu=1/2$, an orthonormal basis is given by the polynomials
\begin{displaymath}
	2^{n/2-j+1}p_{j}^{(0,n-2j)}(2\|\yb\|^2-1)S_{\beta,n-2j}(\yb),
\end{displaymath}
where $0\leq j \leq n$ and $S_{\beta,n-2j}(\yb)$ is a spherical harmonic on $S^1$, defined by
\begin{displaymath}
	S_{1,n}(\theta)=\frac{1}{\sqrt{\pi}}\sin(n(\pi/2 - \theta))\, ,\, S_{2,n}(\theta)=\frac{1}{\sqrt{\pi}}\cos(n(\pi/2 - \theta)),
\end{displaymath}
with $x=\cos \theta$ and $y=\sin \theta$; see \cite{Muller} for more information on spherical harmonics.  One important property of orthogonal polynomials on $B^d$ with respect to $w^\mu$ is the compact formula introduced in \cite{Xu},
\begin{multline}
	\left[\SS^\mu_n(\yb)\right]^T \SS^\mu_n(\yb')=\frac{n+\mu+\frac{d-1}{2}}{\mu+\frac{d-1}{2}} \frac{ (\Gamma(\mu))^2}{2^{2\mu-1}\Gamma(2\mu)}\\
	\times\int_{-1}^1\, C_n^{(\mu+\frac{d-1}{2})}(\xb\cdot\yb + t \sqrt{1-|\xb|^2}\sqrt{1-|\yb|^2})(1-t^2)^{\mu-1}\, dt,
	\label{eq:XuBallProd}
\end{multline}
for $\mu >0$, and 
\begin{multline}
	\left[\SS^0_n(\yb)\right]^T \SS^0_n(\yb')=\frac{n+\frac{d-1}{2}}{\frac{d-1}{2}} \left[ C_n^{(\frac{d-1}{2})}(\xb\cdot\yb + \sqrt{1-|\xb|^2}\sqrt{1-|\yb|^2})\right.\\
	\left.+C_n^{(\frac{d-1}{2})}(\xb\cdot\yb - \sqrt{1-|\xb|^2}\sqrt{1-|\yb|^2})\right]
	\label{eq:XuBallProd2}
\end{multline}
for $\mu=0$.

	Following the column vector notation used on $I^m$, we will use the notation
	\begin{displaymath}
		\SS^\mu_n(\yb)= \left[
		\begin{array}{c}
			s_{\alpha^1}^\mu(\yb)\\
			\\
			s_{\alpha^2}^\mu(\yb)\\
			\vdots\\
			\\
			s_{\alpha^{n_d}}^\mu(\yb)
		\end{array}
		\right]
	\end{displaymath}
to denote the column vector whose elements form an orthonormal basis of polynomials of degree $n$ on $B^d$ with respect to $w_\mu$.

The reproducing kernel of degree $n$ with respect to a weight function $w$ on some set $X$ of positive Borel measure, $K_n(\xb, \xb')$, is a function satisfying
\begin{displaymath}
	\int_X\, f(\xb) K_n(\xb,\xb')\, w(\xb)d\xb = f(\xb')
\end{displaymath}
when $f$ is a polynomial of degree less than or equal to $n$.  Given a basis of orthonormal polynomials with respect to $w$, $\{P_\alpha(\xb)\}_{\alpha \in A}$, for the space of polynomials of degree less than or equal to $n$, the reproducing kernel has the form
\begin{displaymath}
	K_n(\xb,\xb')=\sum_{\alpha \in A} P_\alpha(\xb) P_\alpha(\xb').
\end{displaymath}
The $n$'th partial sum of the Fourier orthogonal expansion of an integrable function $f$ on $X$ with respect to $w$, $S(w;f)$, is defined by
\begin{displaymath}
	S_n(w;f)(\xb')=\int_X\, f(\xb)K_n(\xb,\xb')\, w(\xb)d\xb.
\end{displaymath}
With the column vector notation above, the reproducing kernel on the hypercube with respect to $w^{(\alpha,\beta)}$ can be written as 
\begin{displaymath}
	K_n(\xb,\xb')=\sum_{k=0}^n \left[\PP_k^{(\alpha,\beta)}(\xb) \right]^T\PP_k^{(\alpha,\beta)}(\xb'),
\end{displaymath}
and a similar formula holds for the reproducing kernel on the ball with respect to $w_\mu$.  We may then write the reproducing kernel on $B^d \times I^m$ with respect to the weight function $w(\alpha,\beta,\mu;\xb,\yb):=w^{(\alpha,\beta)}(\xb)w_\mu(\yb)$, $K_n(\xb,\xb',\yb,\yb')$, as
\begin{displaymath}
	K_n(\xb,\xb',\yb,\yb')=\sum_{k=0}^n \sum_{j=0}^k \left[\PP_j^{(\alpha,\beta)}(\xb) \right]^T\PP_j^{(\alpha,\beta)}(\xb') \left[\SS^\mu_{k-j}(\yb)\right]^T \SS^\mu_{k-j}(\yb').
\end{displaymath}
We define the $n$'th partial sum of the Fourier orthogonal expansion of an integrable function $f$ on $B^d \times I^m$ to be 
\begin{displaymath}
	S_n(\mu,\alpha,\beta;f)(\xb',\yb')=\int_{B^d}\int_{I^m}K_n(\xb,\xb',\yb,\yb')f(\xb,\yb) w(\alpha,\beta,\mu;\xb,\yb)\,d\xb\,d\yb.
\end{displaymath}

Given a series $\sum s_n$, the Ces\`aro means, or $(C,\delta)$ means, of the series is defined to be
\begin{displaymath}
	\sum_{j=0}^n c_{n,j}^\delta \sum_{k=0}^j s_k,
\end{displaymath}
where $c_{n,j}^\delta = \frac{(-n)_j}{(-n-\delta)_j}$, and $(n)_j=\prod_{k=1}^j (n+k-1)$.  If we define
\begin{displaymath}
	K_n^\delta (\xb,\xb',\yb,\yb')=\sum_{k=0}^n\, c_{n,k}^\delta \sum_{j=0}^k \left[\PP_j^{(\alpha,\beta)}(\xb) \right]^T\PP_j^{(\alpha,\beta)}(\xb') \left[\SS^\mu_{k-j}(\yb)\right]^T \SS^\mu_{k-j}(\yb'),
\end{displaymath}
then the Ces\`aro means of of order $\delta$, or the $(C,\delta)$ means, of the Fourier orthogonal expansion of $f$ are defined by
\begin{displaymath}
	S_n^\delta(\mu, \alpha, \beta; f)(\xb',\yb')=\int_{B^d}\int_{I^m}\, K_n^\delta(\xb,\xb',\yb,\yb')f(\xb,\yb)w(\alpha,\beta,\mu;\xb,\yb) d\xb \, d\yb.
\end{displaymath}
We will be investigating the value of $\delta$ for which this series converges in the space $L^p(B^d\times I^m; w(\alpha,\beta\mu;\xb,\yb))$ - that is, for what $\delta$ is
\begin{displaymath}
	\lim_{n \rightarrow \infty} \int_{B^d \times I^m} \, \left|S_n^\delta(\alpha,\beta,\mu;f)(\xb,\yb)-f(\xb,\yb)\right|^p w(\alpha,\beta,\mu;\xb,\yb) d\xb d\yb =0.
\end{displaymath}

\section{Main Theorem}
\setcounter{equation}{0}
We now present our main result for this paper.
\begin{thm}
	\label{thm:main}
	Let $f$ be a continuous function on $B^d \times I^m$, and suppose that $\mu\geq 0$, $\alpha_i>-1$, $\beta_i>-1$, and $\alpha_i+\beta_i \geq -1$ for $1 \leq i \leq m$.  The Ces\`aro means of the Fourier orthogonal expansion of $f$ with respect to $w(\alpha,\beta,\mu;\xb,\yb)$ converge in $L^p(B^d \times I^m; w(\alpha,\beta,\mu;\xb,\yb))$, with $1 \leq p < \infty$, and $C(B^d \times I^m)$, to $f$ if
	\begin{multline}
		\delta > \sum_{i=1}^m \max\{\alpha_i,\beta_i\} + \mu + \frac{d+m-1}{2}\\
		+\max \left\{ 0, -\sum_{i=1}^m\min\left\{\alpha_i,\beta_i\right\} -\mu-\frac{d+m+1}{2}\right\}.
	\end{multline}
\end{thm}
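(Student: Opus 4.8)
The plan is the classical route for Cesàro summability: show that the operators $S_n^\delta(\mu,\alpha,\beta;\cdot)$ are uniformly bounded on the relevant spaces, observe that $S_n^\delta P\to P$ for every fixed polynomial $P$, and conclude by density (polynomials are dense in $C(B^d\times I^m)$, hence in each $L^p(w)$, $1\le p<\infty$, and Weierstrass approximation handles the $C$ case). Since $K_n^\delta$ is symmetric under $(\xb,\yb)\leftrightarrow(\xb',\yb')$, the operator $S_n^\delta$ is self-adjoint on $L^2(w)$, so by duality and the Riesz--Thorin theorem it is enough to bound it on $L^1$ and on $L^\infty$; both follow from
\begin{equation*}
\sup_{(\xb',\yb')\in B^d\times I^m}\ \int_{B^d\times I^m}\bigl|K_n^\delta(\xb,\xb',\yb,\yb')\bigr|\,w(\alpha,\beta,\mu;\xb,\yb)\,d\xb\,d\yb\ \le\ C,
\end{equation*}
with $C$ independent of $n$. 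The remainder of the argument is devoted to this estimate.

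First I would apply the product Jacobi formula following Theorem~\ref{thm:Gasper} to each block $[\PP_j^{(\alpha,\beta)}(\xb)]^T\PP_j^{(\alpha,\beta)}(\xb')$, rewriting it as $\int_{I^m}[\PP_j^{(\alpha,\beta)}(\mathbf 1)]^T\PP_j^{(\alpha,\beta)}(\zb)\,\Kb^{(\alpha,\beta)}(\xb,\xb',\zb)\,w^{(\alpha,\beta)}(\zb)\,d\zb$, where $\mathbf 1=(1,\dots,1)$. The hypothesis $\alpha_i+\beta_i\ge-1$ is precisely what makes each $K^{(\alpha_i,\beta_i)}$, hence $\Kb^{(\alpha,\beta)}$, a nonnegative kernel, symmetric in its first and third arguments, with total mass $M$ in either of those variables; substituting, interchanging the $\xb$- and $\zb$-integrals, and using the mass bound in the $\xb$ variable reduces the quantity above to a constant times
\begin{equation*}
\sup_{\yb'}\int_{B^d}\int_{I^m}\Bigl|\sum_{k=0}^n c_{n,k}^\delta\sum_{j=0}^k[\PP_j^{(\alpha,\beta)}(\mathbf 1)]^T\PP_j^{(\alpha,\beta)}(\zb)\,[\SS_{k-j}^\mu(\yb)]^T\SS_{k-j}^\mu(\yb')\Bigr|\,w^{(\alpha,\beta)}(\zb)\,w_\mu(\yb)\,d\zb\,d\yb.
\end{equation*}
Next I insert Xu's compact formula \eqref{eq:XuBallProd} (and \eqref{eq:XuBallProd2} when $\mu=0$) for $[\SS_{k-j}^\mu(\yb)]^T\SS_{k-j}^\mu(\yb')$, with $\lambda:=\mu+\tfrac{d-1}{2}$. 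Writing $C_l^{(\lambda)}(x)=\tfrac{(2\lambda)_l}{(\lambda+1/2)_l}\jacobimore{\lambda-1/2}{\lambda-1/2}{l}{x}$ and using the values $\jacobimore{\lambda-1/2}{\lambda-1/2}{l}{1}$ together with the Jacobi normalizing constants, one checks that the $l$-dependent prefactor $\tfrac{l+\lambda}{\lambda}\,\tfrac{\Gamma(\mu)^2}{2^{2\mu-1}\Gamma(2\mu)}$ multiplied by $C_l^{(\lambda)}(u)$ equals, up to a factor independent of $l$, the product $\onjacobimore{\kappa}{\kappa}{l}{1}\,\onjacobimore{\kappa}{\kappa}{l}{u}$ with $\kappa:=\mu+\tfrac{d-2}{2}$. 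Consequently the inner double sum is a constant multiple of $\int_{-1}^1 \wt K_n^\delta\bigl((\zb,u_t),(\mathbf 1,1)\bigr)(1-t^2)^{\mu-1}\,dt$, where $u_t:=\yb\cdot\yb'+t\sqrt{1-\|\yb\|^2}\sqrt{1-\|\yb'\|^2}$ and $\wt K_n^\delta$ is the $(C,\delta)$ kernel on the cube $I^{m+1}=[-1,1]^{m+1}$ for the product-Jacobi weight $w^{(\alpha,\beta)}(\zb)(1-u^2)^{\kappa}$ (for $\mu=0$ the $t$-integral is replaced by the two-term sum coming from \eqref{eq:XuBallProd2}).

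Now I would invoke the Funk--Hecke--type evaluation
\begin{equation*}
\int_{B^d}\int_{-1}^1 g\bigl(\yb\cdot\yb'+t\sqrt{1-\|\yb\|^2}\sqrt{1-\|\yb'\|^2}\bigr)(1-t^2)^{\mu-1}\,dt\,(1-\|\yb\|^2)^{\mu-1/2}\,d\yb=c_{\mu,d}\int_{-1}^1 g(u)(1-u^2)^{\kappa}\,du,
\end{equation*}
valid for every $\yb'\in B^d$ and every integrable $g$ (with the analogous two-term identity when $\mu=0$), applied with $g=\bigl|\wt K_n^\delta((\zb,\cdot),(\mathbf 1,1))\bigr|$; this eliminates all dependence on $\yb$ and $\yb'$ and bounds the quantity in the previous paragraph by
\begin{equation*}
C\int_{I^{m+1}}\bigl|\wt K_n^\delta(\wb,(\mathbf 1,1))\bigr|\,w^{(\alpha,\beta)}(\zb)(1-u^2)^{\kappa}\,d\wb\ \le\ C\sup_{\vb\in I^{m+1}}\int_{I^{m+1}}\bigl|\wt K_n^\delta(\wb,\vb)\bigr|\,w^{(\alpha,\beta)}(\zb)(1-u^2)^{\kappa}\,d\wb,
\end{equation*}
with $\wb=(\zb,u)$. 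By the theorem of Li and Xu \cite{LiXu}, this last supremum is bounded uniformly in $n$ as soon as $\delta$ exceeds the critical index for the product-Jacobi weight on $I^{m+1}$ with parameters $(\alpha_i,\beta_i)$ for $1\le i\le m$ and $(\kappa,\kappa)$ in the extra coordinate — a legitimate range since $\alpha_i+\beta_i\ge-1$ and $\kappa\ge-\tfrac12$ (the latter because $\mu\ge0$). Substituting these parameters and simplifying, using $\kappa+\tfrac{m+1}{2}=\mu+\tfrac{d+m-1}{2}$ and $\kappa+\tfrac{m+1}{2}+1=\mu+\tfrac{d+m+1}{2}$, reproduces exactly the right-hand side of the asserted inequality, with $\sum_i\max\{\alpha_i,\beta_i\}+\mu+\tfrac{d+m-1}{2}$ the main term and $\max\{0,-\sum_i\min\{\alpha_i,\beta_i\}-\mu-\tfrac{d+m+1}{2}\}$ the correction arising from the coordinates with $\min\{\alpha_i,\beta_i\}<-\tfrac12$. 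Combined with the density argument of the first paragraph, this proves the theorem.

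I expect the main difficulty to be the middle step, and within it the two exact-constant computations: pinning down the $l$-dependent normalizations so that the \emph{ball slice} becomes literally a one-dimensional Jacobi reproducing-kernel slice — so that the sum over $j$ may be identified with the cube kernel $\wt K_n^\delta$ on $I^{m+1}$ — and establishing (or locating in Xu's work on the ball) the Funk--Hecke--type identity that collapses the $d$-dimensional ball integral together with the $t$-integral from \eqref{eq:XuBallProd} into a single $\yb'$-independent weighted integral on $[-1,1]$ carrying the Jacobi exponent $\kappa=\mu+\tfrac{d-2}{2}$. By contrast, the symmetry and total-mass properties of Gasper's kernel, the interpolation down to the endpoints $p=1,\infty$, and the Weierstrass density step are routine.
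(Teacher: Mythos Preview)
Your proposal is correct and follows essentially the same route as the paper: Gasper's product formula reduces the cube variable to $\mathbf 1$, Xu's compact formula together with a Funk--Hecke--type collapse turns the $B^d$-integral into a one-dimensional Jacobi integral with exponent $\kappa=\mu+\tfrac{d-2}{2}$, and the result is then bounded by the Li--Xu theorem on $I^{m+1}$; the paper carries out the Funk--Hecke step you invoke by an explicit chain of changes of variables (its second lemma) rather than citing it as an identity. One small correction: under the hypothesis $\alpha_i+\beta_i\ge-1$ alone Gasper's kernel $K^{(\alpha_i,\beta_i)}$ need not be nonnegative (positivity requires, roughly, $\beta_i\ge-\tfrac12$ or $\alpha_i+\beta_i\ge0$); however your argument, like the paper's, only uses the uniform $L^1$ bound $\int|K|\,w\le M$, and that does hold in this range, so the slip is harmless.
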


Our proof of Theorem \ref{thm:main} will ultimately reduce the question of convergence on the cylinder to that of the hypercube.  We will show that the integral
\begin{displaymath}
	\intim \intbd \abs{ K_n^\delta (\xb, \xb', \yb, \yb') }
	w(\alpha,\beta,\mu;\xb,\yb) \,d\xb d\yb
\end{displaymath}
is uniformly bounded by some constant $M$ which is independent of $n$, $\xb'$, and $\yb'$, which will then imply summability, by results known on the hypercube.  Throughout our proof, $c$ will denote a positive constant that may change values from line to line.

Following Lemma 2.2 \text{in} \cite{LiXu}, we first show that it is enough to consider $\xb'=\eb:=(1,1,\ldots,1)$ in the hypercube.
\begin{lem}  
In order to prove the convergence of $(C,\delta)$ means of the orthogonal expansion, it suffices to prove 
\begin{equation} 
	\label{eq:cesaromeans}
	\intim \intbd \abs{ K_n^\delta ( \xb, \eb, \yb, \yb') }
	w(\alpha,\beta,\mu;\xb,\yb) \, d\yb \, d\xb \leq M
\end{equation}
for $M$ independent of $n$ and $\yb'$.
\end{lem}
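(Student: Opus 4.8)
The plan is to chain two reductions. The first --- the classical one underlying such Ces\`aro results, cf.\ Lemma~2.2 of \cite{LiXu} --- passes from norm convergence of the operators $S_n^\delta$ to a uniform $L^1$ bound on their kernels: since polynomials are dense in $C(B^d\times I^m)$ and in every $L^p(B^d\times I^m;w(\alpha,\beta,\mu;\cdot))$ with $1\le p<\infty$, and since $S_n^\delta Q\to Q$ for every polynomial $Q$ (because $c_{n,k}^\delta\to1$ for fixed $k$), convergence in all these spaces will follow once we know
\begin{displaymath}
	\sup_{n\ge0}\ \sup_{(\xb',\yb')\in I^m\times B^d}\ \intim\intbd\abs{K_n^\delta(\xb,\xb',\yb,\yb')}\,w(\alpha,\beta,\mu;\xb,\yb)\,d\xb\,d\yb<\infty .
\end{displaymath}
(Using the symmetry of $K_n^\delta$ under interchange of its two pairs of arguments, this quantity dominates $\|S_n^\delta\|_{C\to C}$ and $\|S_n^\delta\|_{L^1\to L^1}$, hence $\|S_n^\delta\|_{L^p\to L^p}$ for all $p\in[1,\infty]$ by interpolation.) So the substance of the lemma is to obtain this uniform bound from the single estimate \eqref{eq:cesaromeans} at $\xb'=\eb$.

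For that I would use the product formula for product Jacobi polynomials stated just after Theorem~\ref{thm:Gasper}. We may assume $\alpha_i\ge\beta_i$ for all $i$ (otherwise substitute $x_i\mapsto-x_i$, which interchanges $\alpha_i\leftrightarrow\beta_i$, fixes $w^{(\alpha,\beta)}$, and replaces $\eb$ by an appropriate vertex of $I^m$), so that the hypotheses of Theorem~\ref{thm:Gasper} hold. For each multi-index $\gamma$ and each $\xb'$ in the interior of $I^m$, that formula reads
\begin{displaymath}
	P_\gamma^{(\alpha,\beta)}(\xb)\,P_\gamma^{(\alpha,\beta)}(\xb')=\intim P_\gamma^{(\alpha,\beta)}(\eb)\,P_\gamma^{(\alpha,\beta)}(\zb)\,\Kb^{(\alpha,\beta)}(\xb,\xb',\zb)\,w^{(\alpha,\beta)}(\zb)\,d\zb ,
\end{displaymath}
with $\Kb^{(\alpha,\beta)}$ independent of $\gamma$. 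Summing over $|\gamma|=j$ gives the same identity for $\bigl[\PP_j^{(\alpha,\beta)}(\xb)\bigr]^T\PP_j^{(\alpha,\beta)}(\xb')$, and summing this over the finitely many indices in $K_n^\delta$ while taking the $\zb$-integral outside yields the key identity
\begin{displaymath}
	K_n^\delta(\xb,\xb',\yb,\yb')=\intim K_n^\delta(\zb,\eb,\yb,\yb')\,\Kb^{(\alpha,\beta)}(\xb,\xb',\zb)\,w^{(\alpha,\beta)}(\zb)\,d\zb ,
\end{displaymath}
where I have used that $\bigl[\PP_j^{(\alpha,\beta)}(\cdot)\bigr]^T\PP_j^{(\alpha,\beta)}(\cdot)$ is symmetric in its two arguments.

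To conclude, take absolute values in the key identity, integrate against $w(\alpha,\beta,\mu;\xb,\yb)=w^{(\alpha,\beta)}(\xb)w_\mu(\yb)$ over $(\xb,\yb)\in I^m\times B^d$, and interchange the order of integration by Fubini (all terms nonnegative); the inner integral that emerges is $\intim\abs{\Kb^{(\alpha,\beta)}(\xb,\xb',\zb)}\,w^{(\alpha,\beta)}(\xb)\,d\xb$, and bounding it by a constant $M_0$ leaves precisely $M_0\intim\intbd\abs{K_n^\delta(\zb,\eb,\yb,\yb')}\,w(\alpha,\beta,\mu;\zb,\yb)\,d\yb\,d\zb\le M_0M$ by \eqref{eq:cesaromeans} --- a bound independent of $n$, $\xb'$, and $\yb'$. (The boundary points $\xb'\in\partial I^m$ are then recovered from continuity of $\xb'\mapsto\intim\intbd\abs{K_n^\delta(\xb,\xb',\yb,\yb')}\,w\,d\xb\,d\yb$, this being the integral of a polynomial in $\xb'$.) The main obstacle is this last bound: Theorem~\ref{thm:Gasper} controls the integral of $\abs{\Kb^{(\alpha,\beta)}}$ against $w^{(\alpha,\beta)}$ in its \emph{third} variable, whereas here it is integrated in its \emph{first}. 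This is legitimate because the univariate Jacobi kernel is symmetric in all three of its variables --- visible from the formal expansion $K^{(\alpha,\beta)}(x,y,z)=\sum_n p_n^{(\alpha,\beta)}(x)\,p_n^{(\alpha,\beta)}(y)\,p_n^{(\alpha,\beta)}(z)\big/p_n^{(\alpha,\beta)}(1)$ --- so Theorem~\ref{thm:Gasper} applies verbatim with the variables permuted, and $\Kb^{(\alpha,\beta)}=\prod_iK^{(\alpha_i,\beta_i)}$ inherits both the symmetry and the product bound $M_0=\prod_iM_i$; the interchanges of the finite sums with the $\zb$-integral, and of the order of integration, are routine.
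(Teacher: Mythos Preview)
Your argument is correct and follows essentially the same route as the paper: apply Gasper's product formula to replace $\PP_j^{(\alpha,\beta)}(\xb)^T\PP_j^{(\alpha,\beta)}(\xb')$ by an integral against $\PP_j^{(\alpha,\beta)}(\eb)^T\PP_j^{(\alpha,\beta)}(\zb)$, use Fubini, and bound $\intim|\Kb^{(\alpha,\beta)}(\xb,\xb',\zb)|\,w^{(\alpha,\beta)}(\xb)\,d\xb$ uniformly. You are in fact more careful than the paper on several points it glosses over --- the reduction to $\alpha_i\ge\beta_i$, the three-variable symmetry of the Gasper kernel needed to integrate in the first rather than the third slot, and the preliminary passage from norm convergence to a uniform $L^1$ kernel bound.
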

\begin{proof}
Using Theorem \ref{thm:Gasper}, we obtain 
\begin{align*}
	A_n^\delta&:= \intim \intbd \abs{ K_n^\delta(\xb,\xb',\yb,\yb') }
	w(\alpha, \beta, \mu;\xb ,\yb) d\yb \, d\xb\\
	& = \intim \intbd \abs{ \sum_{j=0}^n c_{n,j}^\delta 
	\sum_{k=0}^j \left[ \PP_{j-k}^{(\alpha,\beta)} (\xb) \right]^T
	\PP_{j-k}^{(\alpha,\beta)} (\xb') 
	\left[ \SS_{k}^{\mu} (\yb) \right]^T
	\SS_{k}^\mu(\yb')}\\
	& \phantom{AAAAA} \times w(\alpha,\beta, \mu;,\xb, \yb)\, d\yb d\xb\\
	& \leq \intim \intbd \intim \abs{ \sum_{j=0}^n c_{n,j}^\delta 
	\sum_{k=0}^j \left[ \PP_{j-k}^{(\alpha,\beta)} (\eb) \right]^T
	\PP_{j-k}^{(\alpha,\beta)} (\zb) 
	\left[ \SS_{k}^{\mu}(\yb) \right]^T
	\SS_{k}^\mu (\yb') }\\
	& \phantom{AAAAA} \times \abs{ \Kb (\xb,\xb',\zb) } w^{(\alpha,\beta)} (\zb) d\zb\,
	w(\alpha, \beta, \mu; \xb, \yb)\, d\yb\,d\xb.\\
\end{align*}
Applying Fubini's theorem gives
\begin{align*}
	A_n^\delta&\leq \intbd \intim \abs{ \sum_{j=0}^n c_{n,j}^\delta 
	\sum_{k=0}^j \left[ \PP_{j-k}^{(\alpha,\beta)} (\eb) \right]^T
	\PP_{j-k}^{(\alpha,\beta)} (\zb) 
	\left[ \SS_{k}^{\mu}(\yb) \right]^T
	\SS_{k}^\mu(\yb')}\\
	& \qquad \times \intim \abs{ \Kb (\xb,\xb',\zb) } \, w^{(\alpha,\beta)} (\xb) \,d\xb \, 
	w(\alpha,\beta,\mu;\zb,\yb)\, d\zb \, d\yb\\
	&\leq M \intim \intbd \abs{ K_n^\delta(\eb,\zb,\yb,\yb') }
	w(\alpha,\beta,\mu; \zb,\yb)\, d\yb\, d\zb.\\
\end{align*}
Replacing $\zb$ with $\xb$ and switching the places of $\eb$ and $\xb$ proves the lemma.
\end{proof}

Our next lemma reduces the integral over $B^d$ to an integral over $[-1,1]$ of a Gegenbauer polynomial.  The idea for this lemma comes from the proof of Theorem 5.3 \text{in} \cite{Xu}. We define
\begin{displaymath}
	\CG_{\mu}^{(\alpha,\beta)}(\yb'):= \intbd \abs{ K_n^\delta(\xb,\eb,\yb,\yb') }
	w_\mu(\yb) d\yb
\end{displaymath}
and 
\begin{displaymath}
	F_{n,\mu}^\delta( \cdot ):=\sum_{j=0}^n c_{j,n}^\delta \sum_{k=0}^j 
	\frac{k +\mu +\frac{d-1}{2}}{\mu +\frac{d-1}{2}}
	C_k^{(\mu + \frac{d-1}{2})} ( \cdot ) 
	\left[ \PP_{j-k}^{(\alpha,\beta)} (\xb) \right]^T 
	\PP_{j-k}^{(\alpha,\beta)} (\eb). 
\end{displaymath}

\begin{lem} For $\mu \geq 0$, 
	\begin{equation}
		\label{poopinequality}
		\CG_\mu^{(\alpha, \beta)}(\yb') \leq c  \inti \left| F_n^\delta (u) \right| \left( 1 - u^2 \right)^{ 
	\frac{ d-2 }{2} + \mu}\, du,
	\end{equation}
\end{lem}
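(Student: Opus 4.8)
The plan is to replace the ball part of the reproducing kernel $K_n^\delta(\xb,\eb,\yb,\yb')$ using Xu's compact formula \eqref{eq:XuBallProd}, collapse the resulting finite sum against a single $t$-integral into $F_{n,\mu}^\delta$, and then transfer the integration over $B^d$ onto an integration over $[-1,1]$, at which point the weight $(1-u^2)^{(d-2)/2+\mu}$ appears automatically.

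First suppose $\mu>0$. In $K_n^\delta(\xb,\eb,\yb,\yb')$ I would substitute \eqref{eq:XuBallProd} (with $n$ replaced by $k$, and $\yb,\yb'$ playing the role of the two ball variables) for each factor $\left[\SS^\mu_k(\yb)\right]^T\SS^\mu_k(\yb')$. Since the sums over $j$ and $k$ are finite they may be exchanged with the $t$-integration; pulling the common constant out front and identifying the remaining sum with $F_{n,\mu}^\delta$ evaluated at $\yb\cdot\yb'+t\sqrt{1-\|\yb\|^2}\sqrt{1-\|\yb'\|^2}$ gives
\begin{displaymath}
	K_n^\delta(\xb,\eb,\yb,\yb') = \frac{(\Gamma(\mu))^2}{2^{2\mu-1}\Gamma(2\mu)}\inti F_{n,\mu}^\delta\left(\yb\cdot\yb'+t\sqrt{1-\|\yb\|^2}\sqrt{1-\|\yb'\|^2}\right)(1-t^2)^{\mu-1}\,dt.
\end{displaymath}
Moving the absolute value inside the $t$-integral (whose weight is nonnegative) and integrating over $B^d$ yields
\begin{displaymath}
	\CG_\mu^{(\alpha,\beta)}(\yb')\leq c\intbd\inti\left|F_{n,\mu}^\delta\left(\yb\cdot\yb'+t\sqrt{1-\|\yb\|^2}\sqrt{1-\|\yb'\|^2}\right)\right|(1-t^2)^{\mu-1}\,dt\,w_\mu(\yb)\,d\yb.
\end{displaymath}

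The key step is then the integration identity
\begin{displaymath}
	\intbd\inti\left|g\left(\yb\cdot\yb'+t\sqrt{1-\|\yb\|^2}\sqrt{1-\|\yb'\|^2}\right)\right|(1-t^2)^{\mu-1}\,dt\,w_\mu(\yb)\,d\yb = c\inti|g(u)|(1-u^2)^{\frac{d-2}{2}+\mu}\,du,
\end{displaymath}
valid for every integrable $g$ with $c$ independent of $\yb'\in B^d$; this is exactly the computation carried out in the proof of Theorem~5.3 in \cite{Xu}. To prove it I would use the rotation invariance of $w_\mu$ to take $\yb'=(\rho,0,\dots,0)$ with $\rho=\|\yb'\|$, write $\yb=(y_1,\yb_2)$ with $\yb_2\in\RR^{d-1}$, and introduce polar coordinates in $\yb_2$; the integrand does not depend on the angular variables, so they integrate out to a dimensional constant. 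Then a chain of one-dimensional substitutions — $\|\yb_2\|=\sqrt{1-y_1^2}\,v$, next $s=t\sqrt{1-v^2}$ with $v$ integrated out, and finally $u=\rho y_1+s\sqrt{1-y_1^2}\sqrt{1-\rho^2}$ followed by reversing the order of integration and integrating out $y_1$ — collapses the iterated integral, the fractional powers of $1-y_1^2$, $1-v^2$ and $1-s^2$ combining at each stage and the leftover powers of $1-\rho^2$ cancelling, so that $c$ depends only on $d$ and $\mu$. Applying this with $g=F_{n,\mu}^\delta(\cdot)$ (treating $\xb$ as a fixed parameter) and combining with the previous display gives \eqref{poopinequality} when $\mu>0$.

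When $\mu=0$ the weight $(1-t^2)^{\mu-1}$ is not integrable, and one uses \eqref{eq:XuBallProd2} in place of \eqref{eq:XuBallProd}: the same reduction applies, with the $t$-integral replaced by the sum of the two evaluations at $t=\pm1$ and $w_\mu$ replaced by $w_0(\yb)=(1-\|\yb\|^2)^{-1/2}$, each of the two resulting terms being bounded, via the same chain of substitutions, by $c\int_{-1}^1|g(u)|(1-u^2)^{(d-2)/2}\,du$. I expect the integration identity above to be the main obstacle: one must verify that the push-forward of $w_\mu(\yb)\,d\yb\,(1-t^2)^{\mu-1}\,dt$ under $(\yb,t)\mapsto\yb\cdot\yb'+t\sqrt{1-\|\yb\|^2}\sqrt{1-\|\yb'\|^2}$ is, up to a $\yb'$-independent constant, precisely $(1-u^2)^{(d-2)/2+\mu}\,du$ — a clean but slightly delicate change of variables, which is morally the same mechanism that produces the product formula \eqref{eq:XuBallProd} itself.
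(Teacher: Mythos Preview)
Your proposal is correct and follows essentially the same route as the paper: substitute the compact formula \eqref{eq:XuBallProd} (respectively \eqref{eq:XuBallProd2} when $\mu=0$), push the absolute value inside the $t$-integral, and collapse the $B^d$-integral to a one-dimensional integral via the chain of substitutions taken from the proof of Theorem~5.3 in \cite{Xu}. The only cosmetic difference is that the paper works in full spherical coordinates $\yb=r\eta$ and finishes with the auxiliary kernel $D_\lambda$, whereas you split $\yb=(y_1,\yb_2)$ and package the reduction as a single push-forward identity; the two computations are equivalent.
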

\begin{proof}
We first consider the case $\mu >0$.  Substitute \eqref{eq:XuBallProd} into \eqref{eq:cesaromeans} to obtain
\begin{multline} \label{eq:ballest}
	\CG_{\mu}^{(\alpha,\beta)}(\yb')=\intbd \left| \inti F_n^\delta \left( \langle \yb , \yb' \rangle + 
	\sqrt{1-\left| \yb \right|^2} \sqrt{ 1 - \left| \yb' \right|^2 } \,t \right)
	\left( 1 - t^2 \right)^{\mu-1} \, dt \right| \\
	\times w_\mu(\yb)\,  d\yb.
\end{multline}
Applying the change of variable $\yb = r \eta$, where
$\eta \in S^{d-1}$, $0 \leq r \leq 1$, gives
\begin{align*}
	\CG_{\mu}^{(\alpha,\beta)}(\yb')& = \int_0^1 r^{d-1} \intsd  
	\left|  \inti F_n^\delta \left( r \langle \eta , \yb'\rangle 
	+ \sqrt{1 - \left| \yb'\right|^2} \sqrt{1-r^2} \,t \right) 
	\left( 1 - t^2 \right)^{\mu-1} \, dt \right| \\
	& \qquad ( 1-r^2 )^{\mu-1/2} \, d\omega ( \eta ) \, dr, 
\end{align*}
where $d\omega$ is the surface measure on $S^{d-1}$.  Now let $A$ be the rotation matrix satisfying 
$A(\yb') = (0,0,\ldots,0,|\yb'|)$, and apply the change of basis 
$\eta \mapsto A^T \eta$ to obtain
\begin{align*}
	\CG_{\mu}^{(\alpha,\beta)}(\yb') & = \int_0^1 r^{d-1} \intsd \left| 
	\inti F_n^\delta \left( r \eta_d \left| \yb' \right| 
	+ \sqrt{1 - \left| \yb \right|^2} \sqrt{1 - r^2} t \right) 
	\left( 1 - t^2 \right)^{\mu-1} \, dt \right| \\
	& \qquad \times ( 1 - r^2 )^{\mu-1/2} \, 
	d \omega ( \eta ) \, dr
\end{align*}
where $\eta = (\eta_1, \ldots, \eta_d)$.  If we let $\eta_d=s$, then
$\eta = ( \sqrt{ 1 - s^2 } \gamma , s)$ for some $\gamma \in S^{d-2}$, and changing variables gives
\begin{align}
	\label{dammit}
	\CG_{\mu}^{(\alpha,\beta)}(\yb') & = \omega_{d-2}  \int_0^1 r^{d-1} \inti
	\\
	& \notag \times \left| \inti F_n^\delta \left(r s \left| \yb' \right|
	 + \sqrt{ 1 - \left| \yb' \right|^2 } \sqrt{ 1 - r^2 }\,  t \right) 
	\left( 1 - t^2 \right)^{\mu - 1} \, dt  \right| \\
	& \notag \qquad \times ( 1 - r^2)^{ \mu - 1/2} 
	( 1 - s^2)^{\frac{d-3}{2}} \, ds \, dr 
\end{align}
where $\omega_{d-2}$ is the surface area of $S^{d-2}$.  Let $s \mapsto p/r$ so $ds = dp/r$ and move the absolute value inside the innermost integral to obtain
\begin{align*}
	\CG_{\mu}^{(\alpha,\beta)}(\yb')  & \leq \omega_{d-2} \int_0^1 \int_{-r}^r \inti
	 \left|  F_n^\delta \left( p \left| \yb' \right|
	 + \sqrt{ 1 - \left| \yb' \right|^2 } \sqrt{ 1 - r^2 }\, t \right) 
	\left( 1 - t^2 \right)^{\mu - 1}  \right| \, dt \\
	& \qquad \times ( 1 - r^2)^{ \mu - 1/2} r
	( r^2 - p^2)^{\frac{d-3}{2}} \, dp \, dr .
\end{align*}
Switching the order of integration of $r$ and $p$ and applying the change of variable $q \mapsto \sqrt{1-r^2}t$, $dq = \sqrt{1-r^2}\, dt$ gives
\begin{align*}
	\CG_{\mu}^{(\alpha,\beta)}(\yb')  & \leq \omega_{d-2}  \inti \int_{\left| p \right|}^1  
	\int_{-\sqrt{1-r^2}}^{\sqrt{1-r^2}}
	\left|  F_n^\delta \left( p \left| \yb' \right|
	+ \sqrt{ 1 - \left| \yb' \right|^2 } q \right) 
	\right|\\
	&\times \left( 1-r^2 - q^2 \right)^{\mu - 1} \, dq \,r
	( r^2 - p^2)^{\frac{d-3}{2}} \, dr \, dp .
\end{align*}
Switching the order of integration of $q$ and $r$ gives
\begin{align*}
	\CG_{\mu}^{(\alpha,\beta)}(\yb') 	& \leq \omega_{d-2}  \inti 
	\int_{-\sqrt{ 1 -\left| p \right|^2 }}^{ \sqrt{ 1 -\left| p \right|^2} } 
	\left|  F_n^\delta \left( p \left| \yb' \right|
	+ \sqrt{ 1 - \left| \yb' \right|^2 } q \right) \right| \\
	& \qquad \qquad \qquad \qquad
	\times \int_{ \left| p \right| }^{\sqrt{ 1-q^2 }}
	\left( 1-r^2 - q^2 \right)^{\mu - 1} r
	( r^2 - p^2)^{\frac{d-3}{2}} \, dr \, dq \, dp .
\end{align*}
Applying the change of variable $r^2 = u \left( 1- q^2 -p^2\right) + p^2$ shows the inner integral is $\tfrac{1}{2}(1-q^2-p^2)^{\mu+\tfrac{d-3}{2}} B(\mu, \frac{d-1}{2})$, were $B(x,y)$ is the beta function.  Hence, we have the inequality
\begin{align*}
	\CG_{\mu}^{(\alpha,\beta)}(\yb')  & \leq \frac{ \omega_{d-2} B \left( \mu, \frac{ d-1}{2} \right)}{2} \inti 
	\int_{-\sqrt{ 1 -\left| p \right|^2 }}^{ \sqrt{ 1 -\left| p \right|^2} } 
	\left|  F_n^\delta \left( p \left| \yb' \right|
	+ \sqrt{ 1 - \left| \yb' \right|^2 } q \right) \right| \\
	& \qquad \qquad \qquad \qquad \qquad \qquad \qquad
	\times \left( 1 - q^2 - p^2 \right)^{ \frac{d-3}{2} + \mu }\, dq \, dp.\\
\end{align*}
Next, we apply the change of variable $q \mapsto \sqrt{1-p^2}s$ to obtain
\begin{align}
	\label{thebeerequation}
	\CG_{\mu}^{(\alpha,\beta)}(\yb') & \leq c \inti \inti \left|  F_n^\delta \left( p \left| \yb' \right|
	+ \sqrt{ 1 - \left| \yb' \right|^2 } \sqrt{ 1 - p^2 } s \right) \right| \\
	& \notag \qquad \qquad \qquad \qquad \qquad \qquad 
	\times \left( 1 - p^2 \right)^{ \frac{d-2}{2} + \mu }
	\left( 1 - s^2 \right)^{ \frac{d-3}{2} + \mu} \, ds \, dp.
\end{align}
Changing variables once again, we let $ u =  p \left| \yb' \right| + \sqrt{ 1 - \left| \yb' \right|^2 } \sqrt{ 1 - p^2 } s$ 
to obtain
\begin{align}
	\label{thisistoolong}\CG_{\mu}^{(\alpha,\beta)}(\yb')  & \leq c \inti \int_{ p \left| \yb' \right| - 
	\sqrt{1 - \left| \yb' \right|} \sqrt{ 1- p^2 } 
	}^{ p \left| \yb' \right| + 
	\sqrt{1 - \left| \yb' \right|} \sqrt{ 1- p^2 } }
	\left|  F_n^\delta \left( u \right) \right| 
	D_{\frac{d-2}{2}+\mu}(|\yb'|, p, u)\\
	& \notag \qquad \times \left( 1 - p^2 \right)^{ \frac{d-2}{2} + \mu }
	\left( 1 - u^2 \right)^{ \frac{d-2}{2} + \mu} \, du \, dp,
\end{align}
where the function $D_\lambda (v,p,u)$, introduced in \cite{Xu}, 
is defined by
\begin{displaymath}
	D_\lambda \left( v, p, u\right) = \frac{ \left( 1- v^2 - p^2 - u^2 
	+ 2upv \right)^{\lambda - 1/2} }{ \left[ \left( 1-v^2 \right) 
	\left( 1-u^2 \right) \left( 1-p^2 \right) \right]^{ \lambda } }
\end{displaymath}
for $1-v^2 -p^2 -u^2 +2upv \geq 0$ and $0$ otherwise. It is readily verified that
\begin{displaymath}
	\inti D_\lambda \left( u, v, p \right)
	\left( 1 - p^2 \right)^{\lambda} \, dp 
	= 2^{2\lambda}B( \lambda+ 1/2, \lambda + 1/2).
\end{displaymath}
Hence switching the order of integration in \eqref{thisistoolong}, we have
\begin{align}
	\label{morebeer} \CG_{\mu}^{(\alpha,\beta)}(\yb')  & \leq c \inti  \left| F_n^\delta \left( u \right) \right| \inti
	D_{\frac{ d-2 }{ 2 } + \mu} \left( \left| \yb' \right|, u, p \right)
	\left( 1- p^2 \right)^{ \frac{ d-2 }{2} + \mu } \, dp \\
	& \notag \qquad \times \left( 1- u^2 \right)^{ \frac{ d-2 }{2} + \mu } \, du \\
	& \notag \leq c \inti \left| F_n^\delta (u) \right| \left( 1 - u^2 \right)^{ 
	\frac{ d-2 }{2} + \mu}\, du.
\end{align}
This proves the lemma for $\mu > 0$.

Turning our attention now to the case when $\mu = 0$, we substitute \eqref{eq:XuBallProd2} into the left side of \eqref{eq:cesaromeans} and ignore the integral over $I^m$ as before to obtain
\begin{align*}
	 \CG_0^{(\alpha,\beta)}(\yb'):=
	\int_{B^d} & \bigg| 
	F_n^\delta \left( \langle \yb , \yb' \rangle + \sqrt{1-\left| \yb \right|^2} \sqrt{ 1 - \left| \yb' \right|^2 }  \right)\\
	& + F_n^\delta \left( \langle \yb , \yb' \rangle  \sqrt{1-\left| \yb \right|^2} \sqrt{ 1 - \left| \yb' \right|^2 }  \right)
	\bigg| w_0(\yb) d\yb.
\end{align*}
We perform the same change of variables from the case when $\mu >0$ to obtain the equivalent of $\eqref{dammit}$,
\begin{align*}
	\CG_0^{(\alpha,\beta)}(\yb') = & \omega_{d-2}  \int_0^1 r^{d-1} \int_{-1}^1  \bigg| 
	F_n^\delta \left(rs|\yb'| + \sqrt{1-|\yb'|^2} \sqrt{1-r^2} \right) \\
	& + F_n^\delta \left( rs|\yb'| - \sqrt{1-|\yb'|^2} \sqrt{1-r^2} \right) \bigg| 
	(1-r^2)^{-1/2} (1-s^2)^{\tfrac{d-3}{2}}\, ds \, dr.
\end{align*}
Now we substitute $p = \sqrt{1-r^2}$ and let $v = \sqrt{1-|\yb'|^2}$ to obtain
\begin{align}
	\notag
	\CG_0^{(\alpha,\beta)}(\yb') 
	& = \omega_{d-2} \int_{-1}^1 \int_{0}^1 
	\bigg| 
	F_n^\delta \left(\sqrt{1-p^2} \sqrt{1-v^2}s + pv \right) \\
	\notag
	& + F_n^\delta \left( \sqrt{1-p^2} \sqrt{1-v^2}s - pv \right) 
	\bigg| 
	(1-p^2)^{\tfrac{d-2}{2}} (1-s^2)^{\tfrac{d-3}{2}}\, dp\, ds\\
	\notag
	& =\omega_{d-2} \int_{-1}^1 \int_{-1}^1 \left| 
	F_n^\delta \left(\sqrt{1-p^2} \sqrt{1-v^2}s + pv \right) \right| 
	\notag \\
	& \qquad \times (1-p^2)^{\tfrac{d-2}{2}} (1-s^2)^{\tfrac{d-3}{2}}\, dp\, ds .
	\label{wantsabeer}
\end{align}
The right side of \eqref{wantsabeer} is the right side of \eqref{thebeerequation}, with $v$ in place of $|\yb'|$.  Following the same steps of the proof for $\mu > 0$, we obtain the equivalent of \eqref{morebeer},
\begin{align}
	\CG_{0}^{(\alpha,\beta)}(y')  & 
	\leq c \inti \left| F_n^\delta (u) \right| \left( 1 - u^2 \right)^{ 
	\frac{ d-2 }{2}}\, du ,
	\notag
\end{align}
which proves the case for $\mu = 0$.
\end{proof}

We are now able to prove Theorem \ref{thm:main}.
\begin{proof}[Proof of Theorem 3.1] We substitute \eqref{poopinequality} into \eqref{eq:cesaromeans} to obtain
\begin{align*}
	&\intim \intbd \abs{ K_n^\delta ( \xb, \eb, \yb, \yb') }
	w(\alpha, \beta, \mu; \xb, \yb) \, d\yb\, d\xb\\
	\leq & c \intim \inti \left| \sum_{j=0}^n c_{k,n}^\delta
	\sum_{k=0}^j \frac{ k +\mu + \frac{ d-1 }{2} }{ \mu + \frac{d-1}{2} }
	C_k^{ \left( \mu + \frac{ d-1 }{2} \right) } \left( u \right) \right. \\
	& \left. \qquad \qquad \times 
	\left[ \PP_{j-k}^{ \left( \alpha, \beta \right) } \left( \xb \right) \right] ^T 
	\PP_{j-k}^{ \left( \alpha, \beta \right) } \left( \eb \right) \right|
	\, \left( 1- u^2 \right)^{ \frac{ d-2 }{2} + \mu } \, du
	\, w^{ \left( \alpha, \beta \right) } (\xb) \, d\xb.
\end{align*}
After substituting in the well known identity for Gegenbauer polynomials, 
\begin{displaymath}
	\displaystyle \frac{n+\lambda}{\lambda} C_n^\lambda(x)=\tilde{C}_n^\lambda(1)\tilde{C}_n^\lambda(x),
\end{displaymath}
we arrive at the following inequality.
\begin{align}\label{al:unifbdsum}
	& \intim \intbd \abs{ K_n^\delta ( \xb, \eb, \yb, \yb') }
	w(\alpha,\beta,\mu; \xb, \yb)\\ 
	\leq & c \intim \inti \left| \sum_{j=0}^n c_{k,n}^\delta
	\sum_{k=0}^j \widetilde{C}_k^{ \left( \mu + \frac{ d-1 }{2} \right) } 
	\left( u \right) \right. 
	\widetilde{C}_k^{ \left( \mu + \frac{ d-1 }{2} \right) } 
	\left( 1 \right) \notag\\
	& \left. \qquad \qquad \times 
	\left[ \PP_{j-k}^{ \left( \alpha, \beta \right) } \left( \xb \right) \right] ^T 
	\PP_{j-k}^{ \left( \alpha, \beta \right) } \left( \eb \right) \right|
	\, \left( 1- u^2 \right)^{ \frac{ d-2 }{2} + \mu } \, du
	\, w^{ \left( \alpha, \beta \right) } (\xb) \, d\xb. \notag
\end{align}
Since the Gegenbauer polynomials are a subset of the Jacobi polynomials, the proof of our theorem then follows from the results on the cube; specifically, Theorem 1.1 and Lemma 2.2 from \cite{LiXu}, which shows that the expression in \eqref{al:unifbdsum} is uniformly bounded for our choice of $\delta$.
\end{proof}

\section{Further Investigation}
It should be noted that it is not known whether the bound for $\delta$ obtained in Theorem \ref{thm:main} is sharp.  In fact, the sharpness of the bound for $\delta$ on the cube in \cite{LiXu}, are, to the author's knowledge, not known.  These questions are areas for further investigation.

\setcounter{equation}{0}


\begin{thebibliography}{00}
\bibitem{CastellFilbirXu}
	{\sc {W. zu Castell, F. Filbir, Y. Xu}},
	\textit{Ces\`aro means of Jacobi expansions on the parabolic biangle},
	J. Approx. Theory, \textbf{159} (2009), 167-179.

\bibitem{DaiXu}
	{\sc {F. Dai, Y. Xu}},
	\textit{Boundedness of projection operators and Ces\`aro means in weighted $L^p$ space on the unit sphere},
	Trans. Amer. Math. Soc., \textbf{361} (2009), 3189-3221.
	
\bibitem{Marchi}
	{\sc {S. De Marchi, M. Marchioro, A. Sommariva}},
	\textit{Polynomial approximation and cubature at approximate Fekete and Leja points of the cylinder},
	Appl. Math. Comput., \textbf{218} (2012), 10617-10629.

\bibitem{DunklXu}
	{\sc {C. F. Dunkl, Y. Xu}},
	\textit{Orthogonal Polynomials of Several Variables},
	Encyclopedia of Mathematics and its Applications, vol. \textbf{81}, Cambridge University Press, 2001.

\bibitem{Gasper}
	{\sc {G. Gasper}},
	\textit{Banach Algebras for Jacobi Series and Positivity of a Kernel},
	Ann. of Math., \textbf{95} (1972), 261-280.

\bibitem{LiXu}
	{\sc {Z. Li, Y. Xu}},
	\textit{Summability of Product Jacobi Expansions},
	J. Approx. Theory, \textbf{104} (2000), 287-301. 

\bibitem{Muller}
	{\sc {C. M\"uller}},
	\textit{Spherical Harmonics},
	Lecture Notes in Mathematics, vol. \textbf{17}, Springer-Verlag, 1966.

\bibitem{Szego}
	{\sc {G. Szeg\"o}},
	\textit{Orthogonal Polynomials},
	Colloquium Publications, vol. \textbf{23}, American Mathematical Society, 2000.

\bibitem{Wade}
	{\sc {J. Wade}},
	\textit{A discretized Fourier orthogonal expansion in orthogonal polynomials on a cylinder},
	J. Approx. Theory, \textbf{162} (2010), 1545-1576.

\bibitem{Xu}
	{\sc {Y. Xu} },
	\textit{Summability of Fourier Orthogonal Series for Jacobi Weight on a Ball in $\RR^d$},
	Trans. Amer. Math. Soc., \textbf{351} (1999), 2439-2458.

\bibitem{Xu2}
	{\sc {Y. Xu} },
	\textit{Fourier Series and Approximation on Hexagonal and Triangular Domains},
	Constr. Approx., \textbf{31} (2010), 115-138.
\end{thebibliography}
\end{document}